\definecolor{TITLE}{rgb}{0.0,0.0,1.0}
\definecolor{AUTHOR1}{rgb}{0.00,0.59,0.00}
\definecolor{AUTHOR2}{rgb}{0.50,0.00,1.00}
\definecolor{SECTION}{rgb}{0.50,0.00,1.00}
\definecolor{FOOTTITLE}{rgb}{0.00,0.50,0.75}
\definecolor{THM}{rgb}{0.7,0.3,0.3}
\definecolor{SEC}{rgb}{0.6,0.1,.5}
\newtheorem{theorem}{{\color{THM} Theorem}}[section]
\let\oldthm\thetheorem
\def\thetheorem{{\color{THM}\oldthm}}
\newtheorem{lemma}[theorem]{{\color{THM}Lemma}}
\newtheorem{corollary}[theorem]{{\color{THM}Corollary}}
\theoremstyle{definition}
\newtheorem{definition}[theorem]{{\color{THM}Definition\ }}
\newtheorem{example}[theorem]{{\color{THM}Example}}
\newtheorem{remark}[theorem]{{\color{THM}Remark}}
\numberwithin{equation}{section}
\newcommand{\oj}{\overrightarrow{{\omega}}}
\newcommand{\Oj}{\overrightarrow{{\Omega}}}
\newcommand{\A}{\mathfrak A}
\newcommand{\bea}{\begin{eqnarray*}}
\newcommand{\eea}{\end{eqnarray*}}
\numberwithin{equation}{section}
\begin{document}
\title[The Arens  Regularity ...]{{\color{TITLE} Arens regularity of certain weighted semigroup algebra and countability}}
\author[ Khodsiani , Rejali and Ebrahimi Vishki]{\color{AUTHOR2} B. Khodsiani$^1$,  A. Rejali$^2$  and H.R. Ebrahimi Vishki$^3$}
\address{$^{1,2}$ Department of Mathematics, University of Isfahan, Isfahan, IRAN.}
\email{b-khodsiani@sci.ui.ac.ir}\email{rejali@sci.ui.ac.ir}
\address{$^3$ Department of Pure Mathematics and  Centre of Excellence
in Analysis on Algebraic Structures (CEAAS), Ferdowsi University of
Mashhad, P.O. Box 1159, Mashhad 91775, Iran.}
\email{vishki@um.ac.ir}

\subjclass[2010]{43A10, 43A20, 46H20, 20M18.}

\keywords{ Arens regularity; weighted semigroup algebra; completely simple semigroup; inverse semigroup.}

\begin{abstract} It is known that every countable semigroup admits a weight $\omega$ for which the semigroup algebra $\ell_1(S,\omega)$ is Arens regular and no uncountable group admits such a weight; see \cite{CY}.  In this paper, among other things,  we show that for a large class of semigroups, the Arens regularity of the weighted semigroup algebra $\ell_1(S,\omega)$ implies the countability of $S$.
 \end{abstract}
\maketitle

\section{\color{SEC}Introduction and Preliminaries}
Arens \cite{ARENS2}  introduced two multiplications on the second dual
$\A^{**}$ of a Banach algebra $\A$ turning it into Banach algebra. If these multiplications are coincide then $\A$ is said to be Arens regular. The Arens regularity of the semigroup algebra $\ell_1(S)$ has been investigated in \cite{Young}.   The Arens regularity of the weighted semigroup algebra $\ell_1(S,\omega)$ has been studied in \cite{CY} and \cite{BR}. In \cite{BR} Baker  and Rejali   obtained some nice  criterions for Arens regularity of
$\ell_1(S,\omega)$. Recent developments on the Arens regularity of $\ell_1(S,\omega)$ can be found in \cite{DL}.
For the algebraic theory of semigroups our general reference is \cite{Howie}.

In this paper we first show that the Arens regularity of a weighted semigroup algebra is stable under certain homomorphisms of semigroups (Lemma \ref{epimorphism}). Then we study those conditions under which the Arens regularity of $\ell_1(S,\omega)$ necessities the countability of $S$. The most famous example for such a semigroup is actually a group, as Craw and Young have proved in their nice paper \cite{CY}. As the main aim of the paper we shall show that for a wide variety of semigroups the Arens regularity of $\ell_1(S,\omega)$ implies that $S$ is countable; (see Theorems \ref{main} and \ref{main1}).

\section{ Arens Regularity of $\ell_1(S,\omega)$ and some hereditary properties}
Let $S$ be a semigroup  and $\omega: S\rightarrow(0,\infty)$ be a weight on $S$, i.e. $\omega(st)\leq\omega(s)\omega(t)$ for all
$s,t\in S$, and let $\Omega: S\times S\rightarrow(0,1]$ be defined by $\Omega(s,t)=\frac{\omega(st)}{\omega(s)\omega(t)}$, for $s,t\in
S$. Following \cite{BR}, we call $\Omega$ to be  $0-$cluster if for each pair of sequences $(x_n),(y_m)$ of
distinct elements of  $S$, $\lim_n\lim_m\Omega(x_n,y_m)=0=\lim_m\lim_n\Omega(x_n,y_m)$ whenever both iterated limits exist.

We define, $$\ell_\infty(S,\omega):=\{f:S\rightarrow \mathbb{C}: ||f||_{\omega,\infty}=\sup\{|\frac{f(s)}{\omega(s)}|:s\in
S\}<\infty\}$$
$$\ell_1(S,\omega):=\{g:S\rightarrow \mathbb{C}: ||g||_{\omega,1}=\sum_{s\in S}|g(s)|\omega(s)<\infty\}.$$

For ease of reference we quote the following criterion from \cite{BR} which will be frequently used in the sequel.
 \begin{theorem}\label{BR}\cite[Theorems 3.2, 3.3]{BR}
 For a weighted semigroup algebra  $\ell_1(S,\omega),$ the following statements are equivalent.
 \begin{enumerate}
 \item [(i)] $\ell_1(S,\omega)$ is  regular.
 \item [(ii)] The map $(x,y)\mapsto\chi_A(xy)\Omega(x,y)$ is cluster on $S\times S$ for each $A\subseteq S.$
 \item [(iii)] For each pair of sequences $(x_n),(y_m)$ of distinct points of $S$ there exist subsequences $(x'_n),(y'_m)$ of $(x_n),(y_m)$ respectively such that either\begin{enumerate}
     \item[(a)] $\lim_n\lim_m\Omega(x'_n,y'_m)=0=\lim_m\lim_n\Omega(x'_n,y'_m),$ or
      \item[(b)] the matrix $(x'_ny'_m)$ is of type $C$.
      \end{enumerate}
\end{enumerate}
In particular, if $\Omega$ is $0-$cluster then $\ell_1(S,\omega)$ is  regular.
 \end{theorem}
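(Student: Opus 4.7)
The plan is to invoke Grothendieck's double-limit criterion for Arens regularity: a Banach algebra $\A$ is Arens regular if and only if for every pair of bounded sequences $(a_n), (b_m) \sub \A$ and every $f \in \A^*$, the iterated limits $\lim_n \lim_m \la f, a_n b_m\ra$ and $\lim_m \lim_n \la f, a_n b_m\ra$ coincide whenever both exist.

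For (i)$\ff$(ii), I would apply this criterion to $\A = \ell_1(S,\omega)$, whose dual is naturally $\ell_\infty(S,\omega)$. A standard convexity/density reduction shows it suffices to test only on sequences of normalized point masses $\delta_{x_n}/\omega(x_n)$ and $\delta_{y_m}/\omega(y_m)$, each of norm one. Since $\delta_x \delta_y = \delta_{xy}$ in the semigroup algebra, a direct computation gives
$$\left\la f,\ \frac{\delta_{x_n}}{\omega(x_n)} \cdot \frac{\delta_{y_m}}{\omega(y_m)} \right\ra = \frac{f(x_n y_m)}{\omega(x_n y_m)}\, \Omega(x_n, y_m).$$
Writing $g(s) = f(s)/\omega(s)$ identifies $\ell_\infty(S,\omega)$ isometrically with $\ell_\infty(S)$, so the double-limit condition becomes: $g(xy)\,\Omega(x,y)$ is cluster on $S\times S$ for every bounded $g$. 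Approximating bounded functions by simple functions reduces the test to $g = \chi_A$ for arbitrary $A \sub S$, which is precisely (ii).

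For (ii)$\ff$(iii), the idea is a Ramsey-style subsequence extraction. Given any sequences $(x_n), (y_m)$ of distinct points, first thin out to subsequences along which the two iterated limits of $\Omega(x'_n, y'_m)$ both exist. If both limits equal zero we are in case (iii)(a), and the cluster condition on $\chi_A(xy)\,\Omega(x,y)$ is satisfied automatically for every $A$. Otherwise, at least one iterated limit of $\Omega$ is strictly positive, in which case the cluster condition forces the boolean iterated limits of $\chi_A(x'_n y'_m)$ to agree for every $A \sub S$; this is a structural constraint on the product matrix $(x'_n y'_m)$, and "type $C$" as introduced in \cite{BR} is designed to encode exactly that combinatorial structure. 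The main obstacle will be to show that type $C$ is the sharp dichotomy: after a further Ramsey thinning one must check that if no subsequence realises case (a), then the entries of $(x'_n y'_m)$ must organise themselves into one of the finitely many patterns captured by type $C$, and conversely that each such pattern yields matching iterated limits of $\chi_A$ for every $A$.

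The concluding \emph{in particular} assertion is then immediate: if $\Omega$ is $0$-cluster, every pair of sequences satisfies case (iii)(a) without any further thinning, so (iii) holds trivially and $\ell_1(S,\omega)$ is Arens regular by the equivalence just established.
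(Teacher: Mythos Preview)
The paper does not prove this theorem at all: it is explicitly quoted from \cite{BR} (``For ease of reference we quote the following criterion from \cite{BR}\ldots'') and is used as a black-box tool throughout. There is therefore no proof in the paper to compare your proposal against.

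Your sketch is a plausible outline of how such a result is established---reducing to point masses via convexity, computing the pairing as $g(xy)\Omega(x,y)$, and invoking a Grothendieck-type double-limit criterion is indeed the standard route, and this is essentially what Baker and Rejali do in \cite{BR}. The part you flag as the ``main obstacle'' (the Ramsey-style dichotomy leading to type~$C$) is exactly where the real work lies in the original reference, and your description of it is accurate at the level of a plan but not a proof. If you were asked to supply a full argument you would need to spell out the definition of type~$C$ and carry out the subsequence extraction carefully; as a citation of a known result, however, no proof is required here.
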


 Let $\psi: S\rightarrow T$ be a homomorphism  of semigroups. If   $\omega$ is a  weight on $T$ then trivially $\overleftarrow{{\omega}}(s):=\omega(\psi(s))$ defines a weight on $S$.

 If  $\psi: S\rightarrow T$ is an epimorphism and $\omega$ is a   bounded below (that is, $\inf\omega(S)>0$)  weight on $S$ then  a direct verification reveals that
\[\overrightarrow{{\omega}}(t):=\inf\omega(\psi^{-1}(t)),\quad (t\in T),\] defines a weight on $T.$
We commence with the next elementary result concerning to the stability of regularity under the semigroup homomorphism.
\begin{lemma}\label{epimorphism} Let $\psi: S\rightarrow T$ be a homomorphism of semigroups.
\begin{enumerate}
  \item [(i)] If $\psi$ is onto and  $\omega$ is bounded below weight on $S$ then the regularity of  $\ell_1(S,\omega)$ necessities the regularity of  $\ell_1(T,\overrightarrow{{\omega}})$.
  Furthermore if $\Omega$ is 0-cluster, then $\overrightarrow{{\Omega}}$ is 0-cluster.
  \item [(ii)] For a weight $\omega$ on $T$ if   $\ell_1(S,\overleftarrow{{\omega}})$
      is  regular, then $\ell_1(T,\omega)$ is  regular.
 \end{enumerate}
\end{lemma}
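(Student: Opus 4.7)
The plan for both parts is to apply the subsequence characterisation Theorem \ref{BR}(iii), lifting sequences from $T$ to $S$ along $\psi$ and transporting the resulting alternative back to $T$. The core technical ingredient is a sharp comparison between the $\Omega$-functions on the two semigroups.

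For part (ii), the comparison is an identity: writing $\overleftarrow{\Omega}$ for the $\Omega$-function on $S$ associated with $\overleftarrow{\omega}$ and $\Omega$ for the one on $T$ associated with $\omega$, a one-line calculation from the definitions gives
\[\overleftarrow{\Omega}(s_1, s_2) = \frac{\omega(\psi(s_1 s_2))}{\omega(\psi(s_1))\omega(\psi(s_2))} = \Omega(\psi(s_1), \psi(s_2)),\]
so $\overleftarrow{\Omega}$ is literally the pull-back of $\Omega$ along $\psi \times \psi$. Given sequences $(t_n), (u_m)$ of distinct elements of $T$, I pick preimages $s_n \in \psi^{-1}(t_n)$ and $s'_m \in \psi^{-1}(u_m)$ (automatically distinct because the $t_n, u_m$ are), apply Theorem \ref{BR}(iii) in $S$ to obtain subsequences satisfying (a) or (b), and push the conclusion back to $T$: case (a) transports verbatim via the pull-back identity, while in case (b) the image matrix $(\psi(s'_n s''_m)) = (t'_n u'_m)$ inherits the type-$C$ structure.

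For part (i), the comparison is an inequality. I first verify that $\overrightarrow{\omega}$ is a genuine weight on $T$ (well-defined by surjectivity, strictly positive by the bounded-below hypothesis, and submultiplicative since $\psi^{-1}(t_1)\psi^{-1}(t_2) \subseteq \psi^{-1}(t_1 t_2)$). Given sequences $(t_n), (u_m)$ of distinct elements of $T$, I choose near-optimal preimages $x_n \in \psi^{-1}(t_n)$ and $y_m \in \psi^{-1}(u_m)$ with $\omega(x_n) \le (1+\tfrac{1}{n})\overrightarrow{\omega}(t_n)$ and $\omega(y_m) \le (1+\tfrac{1}{m})\overrightarrow{\omega}(u_m)$; the $(1+\varepsilon)$-multiplicative form of approximation suits a multiplicative weight and is available precisely because $\overrightarrow{\omega}$ is bounded below. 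Then, using $x_n y_m \in \psi^{-1}(t_n u_m)$,
\[\overrightarrow{\Omega}(t_n, u_m) \le \frac{\omega(x_n y_m)}{\overrightarrow{\omega}(t_n)\overrightarrow{\omega}(u_m)} \le \bigl(1+\tfrac{1}{n}\bigr)\bigl(1+\tfrac{1}{m}\bigr)\,\Omega(x_n, y_m).\]
Applying Theorem \ref{BR}(iii) to $(x_n), (y_m)$ in $S$ and transporting back: in case (a) the iterated limits of $\overrightarrow{\Omega}$ on $(t'_n), (u'_m)$ vanish (the prefactors tend to $1$ and the iterated limits of $\Omega$ are $0$), and in case (b) type-$C$ is preserved by $\psi$. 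The ``furthermore'' 0-cluster assertion follows from the same inequality, after an extra subsequence step if needed to make the relevant iterated limits of $\Omega$ exist so that the 0-cluster of $\Omega$ applies.

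The main obstacle I anticipate is the type-$C$ transfer, namely that the image of a type-$C$ matrix under a semigroup homomorphism is again of type $C$. Since the definition in \cite{BR} is framed via patterns of equalities among the entries and $\psi$ preserves equalities this ought to go through, but one must check that any extra collapses produced by $\psi$ do not spoil the precise combinatorial pattern required by the definition.
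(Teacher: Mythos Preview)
Your approach is essentially the same as the paper's: lift sequences along $\psi$, establish a multiplicative comparison between $\overrightarrow{\Omega}$ and $\Omega$ on the lifts, and invoke Theorem~\ref{BR}(iii). The paper carries this out only for part~(i), choosing a fixed approximation factor (picking $s_n$ with $\overrightarrow{\omega}(x_n) > \omega(s_n)(1-\varepsilon)$, giving $\overrightarrow{\Omega}(x_n,y_m) \le (1-\varepsilon)^{-2}\,\Omega(s_n,t_m)$) rather than your vanishing $(1+\tfrac1n)(1+\tfrac1m)$ factors; both clearly work. The paper does not write out part~(ii) at all, so your treatment there is more complete, and your pull-back identity $\overleftarrow{\Omega}=\Omega\circ(\psi\times\psi)$ is exactly the right observation.

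Two small remarks. First, your proof of (ii) tacitly uses surjectivity of $\psi$ when you pick preimages; the lemma's label and all applications in the paper are for epimorphisms, so this is the intended hypothesis even though the statement of (ii) does not repeat it. Second, your flagged concern about type~$C$ transfer is easily dispatched: for $B\subseteq T$ one has $\chi_B(\psi(s'_n s''_m))=\chi_{\psi^{-1}(B)}(s'_n s''_m)$, so the cluster condition defining type~$C$ for the image matrix reduces to instances of the same condition for the original matrix.
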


\begin{proof}(i) Since $\omega$ is bounded below,  we can assume that, $\inf\omega(S)\geq\varepsilon>0$, for some
$\varepsilon<1$. Hence $\oj\geq \varepsilon$. Let $(x_n),(y_m)$ be sequences of distinct elements in $T$. Then there are sequences
of distinct elements $(s_n), (t_m)$ in $S$ such that
 $$\left\{\begin{array}{lr}
                                                     \oj(x_n) >\omega(s_n)(1-\varepsilon) & \mbox{and}\quad \psi (s_n)=x_n, \\
                                                     \oj(y_m) >\omega(t_m)(1-\varepsilon) & \mbox{and}\quad \psi (t_m)=y_m.
                                                     \end{array}\right.
$$


It follows that $\oj(x_n) \oj(y_m)>\omega(s_n)\omega(t_m)(1-\varepsilon)^2$ and so from $\oj(x_ny_m)\leq \omega(s_nt_m)$ we get  $\frac{\oj(x_ny_m)}{\oj(x_n) \oj(y_m)}\leq\frac{1}{(1-\varepsilon)^2}\frac{ \omega(s_nt_m)}{\omega(s_n)\omega(t_m)};$
or equivalently,
 \begin{equation} \label{nonequa}
\Oj(x_n,y_m)\leq \frac{1}{(1-\varepsilon)^2}\Omega(s_n,t_m),\quad (n,m\in
\mathbb{N}).
\end{equation}
Applying the inequality \eqref{nonequa}, an standard argument based on Theorem \ref{BR} shows that if   $\ell_1(S,\omega)$
      is  regular then  $\ell_1(T,\overrightarrow{{\omega}})$ is  regular.
\end{proof}
\begin{corollary}\label{weight}
Let $\psi :S\rightarrow T$ be a homomorphism of semigroups. If $\ell_1(S)$ is  Arens regular then  $\ell_1(T,\omega)$ is Arens regular, for every  weight function $\omega$ on $T$.
\end{corollary}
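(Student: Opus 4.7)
The plan is to reduce to Lemma \ref{epimorphism}(ii) by producing a weight on $S$ whose weighted algebra is automatically Arens regular. Given a weight $\omega$ on $T$, I would form the pullback weight $\oa(s):=\omega(\psi(s))$ on $S$, as in the discussion preceding Lemma \ref{epimorphism}. The conclusion will then follow from Lemma \ref{epimorphism}(ii) as soon as I verify that $\ell_1(S,\oa)$ is Arens regular.

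To establish this, I plan to exploit a weight-independence already built into the Baker--Rejali criterion. Observe that condition (iii)(b) of Theorem \ref{BR} --- namely that the product matrix $(x'_n y'_m)$ is of type $C$ --- is a purely combinatorial statement about the semigroup and makes no reference to the weight at all. Since $\ell_1(S)$ is Arens regular with the trivial weight $\omega\equiv 1$, for which $\Omega\equiv 1$, condition (iii)(a) is impossible; hence by Theorem \ref{BR}(iii) every pair of sequences of distinct points of $S$ admits subsequences $(x'_n),(y'_m)$ for which $(x'_n y'_m)$ is of type $C$. But then the \emph{same} subsequences witness condition (iii)(b) for \emph{any} weight on $S$, and in particular for $\oa$; applying Theorem \ref{BR} in the other direction gives that $\ell_1(S,\oa)$ is Arens regular.

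Finally, I would invoke Lemma \ref{epimorphism}(ii) to transport Arens regularity from $\ell_1(S,\oa)$ to $\ell_1(T,\omega)$, finishing the proof. The entire argument hinges on the single observation that the type-$C$ clause of Baker--Rejali is weight-free, so once regularity is secured via (iii)(b) with the trivial weight, it propagates to every weight on $S$. I do not anticipate a genuine obstacle; the only care needed is to notice that with $\Omega\equiv 1$ the alternative clause (iii)(a) collapses and thus forces (iii)(b) to carry all the regularity information.
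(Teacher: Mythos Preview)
Your proposal is correct and follows essentially the same route as the paper: pull back $\omega$ to $\oa$ on $S$, show $\ell_1(S,\oa)$ is Arens regular, and then apply Lemma~\ref{epimorphism}(ii). The only difference is that where the paper invokes \cite[Corollary~3.4]{BR} as a black box for the middle step, you unpack that result directly via the weight-independence of clause~(iii)(b) in Theorem~\ref{BR}---which is exactly the content of that corollary.
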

\begin{proof}
Let $\ell^1(S)$ be Arens regular and let $\omega$ be a weight on $T$. Then  $\ell^1(S,\overleftarrow{\omega})$ is Arens regular by  \cite[Corollary 3.4]{BR}. Lemma \ref{epimorphism} implies that $\ell_1(T,\omega)$ is Arens regular.
\end{proof}

\section{Arens regularity of $\ell_1(S,\omega)$ and countability of $S$}
We commence with  the  next result of Craw and Young with a slightly simpler proof.
\begin{corollary}\label{countable}(See \cite[Corollary 1]{CY})
 Let $S$ be a countable semigroup. Then there exists a bounded below weight  $\omega$ on $S$ such that $\Omega$ is 0-cluster. In particular, $\ell_1(S,\omega)$ is Arens regular.
\end{corollary}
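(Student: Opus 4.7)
My plan is to construct a proper sub-additive function $\nu: S \to [1, \infty)$ using the enumeration of $S$, and then set $\omega := e^{\sqrt{\nu}}$. This mirrors the familiar construction for a countable group like $\mathbb{Z}$, where $\omega(n) = e^{\sqrt{|n|}}$ does the job, and it reduces the 0-cluster condition to a routine calculation.

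Assuming $S$ is countably infinite (the finite case is trivial), enumerate $S = \{s_1, s_2, \ldots\}$ and define
$$
\nu(s) \;:=\; \inf\Bigl\{\, n_1 + n_2 + \cdots + n_k : s = s_{n_1}s_{n_2}\cdots s_{n_k},\ k \geq 1 \,\Bigr\}, \qquad s \in S.
$$
The one-letter representation $s = s_n$ forces $\nu(s) < \infty$, and each index being $\geq 1$ forces $\nu(s) \geq 1$. Concatenating near-optimal representations of $s$ and $t$ yields $\nu(st) \leq \nu(s) + \nu(t)$, so $\nu$ is sub-additive. Crucially, $\nu$ is \emph{proper}: any representation contributing to $\nu(s) \leq N$ has length at most $N$ with each index at most $N$, leaving only finitely many such words and hence only finitely many $s \in S$ with $\nu(s) \leq N$.

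Setting $\omega(s) := e^{\sqrt{\nu(s)}}$, the elementary inequality $\sqrt{a+b} \leq \sqrt{a} + \sqrt{b}$ combined with sub-additivity of $\nu$ gives
$$
\omega(st) = e^{\sqrt{\nu(st)}} \leq e^{\sqrt{\nu(s) + \nu(t)}} \leq e^{\sqrt{\nu(s)} + \sqrt{\nu(t)}} = \omega(s)\omega(t),
$$
so $\omega$ is a weight, and $\omega \geq e$ shows it is bounded below.

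It remains to verify that $\Omega$ is $0$-cluster. Let $(x_n), (y_m)$ be sequences of distinct elements of $S$ for which both iterated limits exist; properness combined with distinctness forces $\nu(x_n) \to \infty$ and $\nu(y_m) \to \infty$. From
$$
\Omega(x_n, y_m) = e^{\sqrt{\nu(x_n y_m)} - \sqrt{\nu(x_n)} - \sqrt{\nu(y_m)}}
$$
and the bound $\sqrt{\nu(x_n y_m)} - \sqrt{\nu(y_m)} \leq \sqrt{\nu(x_n) + \nu(y_m)} - \sqrt{\nu(y_m)} \to 0$ as $m \to \infty$, one obtains $\limsup_m \Omega(x_n, y_m) \leq e^{-\sqrt{\nu(x_n)}}$; letting $n \to \infty$ forces the iterated limit to $0$, and symmetry handles the other. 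Theorem \ref{BR} then yields Arens regularity of $\ell_1(S, \omega)$. The only step requiring genuine care is the properness of $\nu$; everything else is routine.
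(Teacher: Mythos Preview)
Your proof is correct, and the underlying construction coincides with the paper's: your function $\nu$ is precisely $\overrightarrow{\omega_1}-1$, where $\omega_1(a_{k_1}\cdots a_{k_r})=1+k_1+\cdots+k_r$ is the paper's weight on the free semigroup $F$ generated by $S$ and $\overrightarrow{\omega_1}$ is its push-forward along the canonical epimorphism $F\to S$. The difference is in packaging. The paper first checks that $\Omega_1$ is $0$-cluster on $F$ (easy, since there $\omega_1(xy)=\omega_1(x)+\omega_1(y)-1$) and then invokes Lemma~\ref{epimorphism}(i) to transfer $0$-clusterness to $S$, thereby avoiding any direct estimate on $S$; you instead unfold the push-forward and verify $0$-cluster by hand. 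The paper also uses the simpler weight $\omega=1+\nu$ rather than your $e^{\sqrt{\nu}}$: since $\Omega(x,y)\le\frac{1+\nu(x)+\nu(y)}{(1+\nu(x))(1+\nu(y))}$ already tends to $0$ under either iterated limit once $\nu$ is proper, the square-root exponential is unnecessary, though certainly not wrong. Your route is self-contained and bypasses Lemma~\ref{epimorphism}; the paper's is shorter because that lemma has just been established.
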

\begin{proof}  Let $F$ be the free semigroup generated by the countable semigroup $S=\{a_k:\quad k\in\mathbb{N}\}$.  For every element $x\in F$ (with the  unique presentation  $x=a_{k_1}a_{k_2}\cdots a_{k_r}$) set $\omega_1(x)=1+k_1+k_2+\cdots k_r$. A direct verification shows that $\omega_1$ is a weight on $F$  with $1\leq\omega_1,$ and that $\Omega_1$ is $0-$cluster.  Let $\psi: F\rightarrow S$ be the canonical epimorphism.  Set $\omega:=\overrightarrow{{\omega_1}}.$ By Lemma \ref{epimorphism}, $\omega$ is our desired weight on $S$.
\end{proof}

In the sequel the following elementary lemma will be frequently used.
\begin{lemma}\label{countable1}
A nonempty set $X$ is countable if and only if there exists a  function $f:X\rightarrow(0,\infty)$ such that the
sequence $(f(x_n))$ is unbounded for every  sequence $(x_n)$ with distinct elements in $X.$
\end{lemma}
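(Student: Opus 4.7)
The proof will be a straightforward equivalence, so the plan is to handle the two implications separately, and keep both arguments elementary.

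For the ``only if'' direction, assume $X$ is countable. If $X$ is finite there is no sequence $(x_n)$ of \emph{distinct} elements at all, so any $f$ (say $f\equiv 1$) works vacuously. If $X$ is countably infinite, enumerate it as $X=\{y_1,y_2,\dots\}$ and define $f(y_k):=k$. For any sequence $(x_n)$ of distinct elements of $X$ we have $x_n=y_{k_n}$ for pairwise distinct indices $k_n$, so $\{k_n:n\in\mathbb N\}$ is an infinite set of positive integers, and consequently $(f(x_n))=(k_n)$ is unbounded.

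For the ``if'' direction, suppose $f:X\to(0,\infty)$ is as in the statement. The plan is to slice $X$ by the level sets $A_n:=\{x\in X:f(x)\leq n\}$, so that $X=\bigcup_{n\in\mathbb N}A_n$. The key claim is that each $A_n$ must be finite: if some $A_n$ were infinite, we could choose a sequence $(x_k)$ of distinct elements in $A_n$, and then $(f(x_k))$ would be bounded by $n$, contradicting the hypothesis on $f$. Once every $A_n$ is finite, $X$ is a countable union of finite sets and hence countable.

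I do not anticipate a genuine obstacle here; the statement is essentially the observation that a set is countable iff it admits a ``proper'' function into $(0,\infty)$ (proper in the sense of having finite preimages of bounded sets). The only mild subtlety is remembering to treat the finite case of $X$ at the outset so that the enumeration step of the forward direction is unambiguous.
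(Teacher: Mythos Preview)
Your proof is correct and follows essentially the same route as the paper: enumerate $X$ and set $f(y_k)=k$ for the forward direction, and for the converse decompose $X=\bigcup_{n}\{x:f(x)\le n\}$. Your version is in fact a touch more careful, since you treat the finite case explicitly and argue that the level sets $A_n$ are \emph{finite} (the paper merely says ``countable''), which is the sharper and more directly justified conclusion.
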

\begin{proof}  If   $X=\{x_n:n\in \mathbb{N}\}$ is countable the  $f(x_n)=n$ is the desired function. For the converse, suppose that such a function $f:X\rightarrow(0,\infty)$ exists. Since $X=\cup_{n\in\mathbb N}\{x\in X:f(x)\leq n \}$ and each of the sets $\{x\in X:f(x)\leq n\}$ is countable,   so $X$ is countable.
\end{proof}
\begin{theorem}\label{cluster}
If $\ell^1(S)$ is not Arens regular and $S$ admits a bounded below weight for which $\Omega$ is $0-$cluster,  then $S$ is countable.
\end{theorem}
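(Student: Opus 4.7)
The plan is to prove the contrapositive: assuming $S$ is uncountable (together with the hypothesized bounded below weight $\omega$ for which $\Omega$ is $0$-cluster), I derive a contradiction. The non-Arens-regularity of $\ell^1(S)$ will not actually intervene in the argument; it merely motivates why one would search for such an $\omega$ in the first place. The key observation is that a bounded below weight cannot drive $\Omega$ to $0$ along sequences where $\omega$ itself is bounded above, and Lemma \ref{countable1} supplies such sequences whenever $S$ is uncountable.

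First I would apply Lemma \ref{countable1} to the function $\omega \colon S \to (0,\infty)$ itself. Since $S$ is uncountable, the lemma produces a sequence $(s_n)$ of distinct elements of $S$ along which $\omega$ is bounded above, say $\omega(s_n)\le M$ for all $n$. Combining this with the hypothesis $\inf\omega(S)\ge \varepsilon>0$, we have $\omega(s_n)\in[\varepsilon,M]$ for every $n$. To comply cleanly with the requirement that the two sequences entering the definition of $0$-cluster have distinct elements, I split $(s_n)$ into two disjoint subsequences $(x_n):=(s_{2n})$ and $(y_m):=(s_{2m-1})$; both consist of distinct elements and both satisfy the same bounds $\omega(x_n),\omega(y_m)\in[\varepsilon,M]$.

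Next I would estimate $\Omega(x_n,y_m)$ from below. For all $n,m$,
\[
\Omega(x_n,y_m)=\frac{\omega(x_n y_m)}{\omega(x_n)\omega(y_m)}\ge \frac{\varepsilon}{M^2}>0,
\]
since $\omega(x_n y_m)\ge\varepsilon$ and $\omega(x_n)\omega(y_m)\le M^2$. Because $\Omega$ takes values in $[0,1]$, a standard diagonal extraction yields subsequences $(x'_n),(y'_m)$ along which both iterated limits $\lim_n\lim_m\Omega(x'_n,y'_m)$ and $\lim_m\lim_n\Omega(x'_n,y'_m)$ exist. By the pointwise bound just proved, each of these iterated limits is at least $\varepsilon/M^2>0$.

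This directly contradicts the hypothesis that $\Omega$ is $0$-cluster, which demands both iterated limits to equal $0$. Hence $S$ must be countable, completing the proof. The only slightly delicate point is the simultaneous extraction of subsequences making both iterated limits exist, but this is the usual Bolzano--Weierstrass/diagonal argument for a bounded double sequence and poses no real obstacle; apart from that, everything is a direct application of Lemma \ref{countable1} and the definition of $\Omega$.
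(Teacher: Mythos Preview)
Your proof is correct and in fact establishes a slightly stronger statement than the paper claims: the non-Arens regularity of $\ell_1(S)$ is never invoked, so what you have really shown is that \emph{any} semigroup admitting a bounded below weight with $\Omega$ $0$-cluster must be countable.

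Both arguments begin identically---assume $S$ uncountable, apply Lemma \ref{countable1} to $\omega$ to extract a sequence $(s_n)$ of distinct elements on which $\omega\le M$, and combine with $\omega\ge\varepsilon$ to get $\Omega\ge\varepsilon/M^2$ on this sequence. The difference is in how the contradiction is reached. The paper attempts to use the non-Arens regularity of $\ell_1(S)$ to pass to subsequences whose product matrix has disjoint upper and lower triangles; this step is marked in the manuscript with a string of question marks, and indeed it is unclear why the sequences witnessing irregularity of $\ell_1(S)$ can be arranged to lie inside the bounded-weight sequence $(s_n)$. You sidestep this entirely: a routine Bolzano--Weierstrass diagonalisation on the bounded double array $(\Omega(x_n,y_m))$ yields subsequences along which both iterated limits exist, and the uniform lower bound forces them to be at least $\varepsilon/M^2>0$, contradicting $0$-cluster directly. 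Your route is both simpler and avoids the questionable step in the paper's argument.
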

\begin{proof}
Let $\omega$ be a  bounded below weight for which $\Omega$ is $0-$cluster. Let $\epsilon>0$ is so that $\omega\geq\epsilon.$ Let $S$ be uncountable. By  Lemma \ref{countable1} there is a sequence $(s_n)$ of distinct
elements in $S$ and $n_0\in \mathbb{N}$ such that $\omega(s_n)\leq n_0$ for all $n\in \mathbb{N}$. As  $\ell_1(S)$ is not Arens regular,
there exist subsequences $(s_{n_k}),(s_{m_l})$ of $(s_n)$ such that $\{s_{n_k}s_{m_l}:k<l\}\cap\{s_{n_k}s_{m_l}:k>l\}=\emptyset$ (????????????????). We thus get
\[\Omega(s_{n_k},s_{m_l})=\frac{\omega(s_{n_k}s_{m_l})}{\omega(s_{n_k})\omega(s_{m_l})}\geq\frac{\epsilon}{n_0^2},\quad (k,l\in\mathbb{N}),\]
 contradicts the $0-$clusterlity of   $\Omega$.
\end{proof}
Abtehi et al. \cite{A.Kh.R} have shown that for a wide variety of semigroups (including Brandt semigroups, weakly cancellative semigroups, (0-)simple inverse semigroups and inverse semigroups with finite set of idempotents) the Arens regularity of the semigroup algebra $\ell^1(S)$ necessities the finiteness of $S$ (see \cite[Corollary 3.2, Proposition 3.4 and Theorem 3.6]{A.Kh.R}). Applying these together with Theorem \ref{cluster} we arrive to the next result.

Note that as it has been reminded  in Theorem \ref{BR}, if $\Omega$ is $0-$cluster then $\ell_1(S,\omega)$ is  regular and the converse is also true in the case where $S$ is weakly cancellative; (see \cite[Corollary 3.8]{BR}).

\begin{theorem}\label{main}
If $S$ admits a bounded below weight for which $\Omega$ is $0-$cluster then $S$ is countable in either of the following cases.
\begin{enumerate}

 \item $S$ is a Brandt semigroup.
  \item $S$ is weakly cancellative.
  \item $S$ is a simple (resp. $0-$simple) inverse semigroup.
  \item $S$ is an inverse semigroup with finitely many idempotents.
  \end{enumerate}
\end{theorem}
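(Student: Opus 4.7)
The strategy is to combine Theorem \ref{cluster} with the results of Abtehi et al. \cite{A.Kh.R} quoted just before the statement. Theorem \ref{cluster} tells us that, whenever $S$ admits a bounded below weight with $\Omega$ being $0$-cluster, \emph{either} $\ell^1(S)$ is Arens regular \emph{or} $S$ is countable. Thus, in each of the four listed cases, it suffices to show that Arens regularity of $\ell^1(S)$ already forces $S$ to be countable (in fact finite), and this is precisely what \cite{A.Kh.R} provides.

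In more detail, I would argue by contradiction: suppose $S$ is uncountable. Since the hypothesis on $\omega$ is in force, Theorem \ref{cluster} forces $\ell^1(S)$ to be Arens regular (otherwise $S$ would already be countable). In each case I then invoke the appropriate result from \cite{A.Kh.R}:
\begin{enumerate}
\item For $S$ a Brandt semigroup, Arens regularity of $\ell^1(S)$ implies $S$ is finite by \cite[Corollary 3.2]{A.Kh.R}.
\item For $S$ weakly cancellative, the same conclusion follows from \cite[Proposition 3.4]{A.Kh.R}.
\item For $S$ a ($0$-)simple inverse semigroup, this is \cite[Theorem 3.6]{A.Kh.R}.
\item For $S$ an inverse semigroup with finitely many idempotents, this again follows from \cite[Theorem 3.6]{A.Kh.R}.
\end{enumerate}
In every case the conclusion ``$S$ is finite'' contradicts the assumed uncountability of $S$, so $S$ must be countable.

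There is essentially no obstacle left in this step, since all the nontrivial content has been isolated in Theorem \ref{cluster} and in the cited structural results. The only point that deserves a sentence of justification is the implicit use of the fact that an uncountable semigroup is, in particular, infinite, so that the hypothesis ``Arens regularity $\Rightarrow$ finiteness'' from \cite{A.Kh.R} can indeed be contraposed to ``infinite $\Rightarrow$ non--Arens regular''. The only potential subtlety — which the authors also flag in the remark following Theorem \ref{main1} via \cite[Corollary 3.8]{BR} — is that in the weakly cancellative case the $0$-cluster hypothesis is equivalent to Arens regularity of $\ell^1(S,\omega)$, so case (2) could alternatively be phrased directly in terms of Arens regularity of the weighted algebra; but for the present theorem this observation is not needed.
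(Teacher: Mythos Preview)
Your proposal is correct and matches the paper's own approach exactly: the paper states just before the theorem that the result follows by combining Theorem \ref{cluster} with \cite[Corollary 3.2, Proposition 3.4 and Theorem 3.6]{A.Kh.R}, which is precisely what you do. One tiny slip: the remark about \cite[Corollary 3.8]{BR} that you allude to appears just \emph{before} Theorem \ref{main}, not after Theorem \ref{main1}.
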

In the next result we shall show that the same result holds when $S$ is a completely simple semigroup.
\begin{theorem}\label{main1}
If $S$ admits a bounded below weight for which $\Omega$ is $0-$cluster then $S$ is countable in the case where $S$ is completely simple [resp. 0-simple].
\end{theorem}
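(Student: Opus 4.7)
The plan is to exploit the Rees structure theorem: $S\cong M(G,I,\Lambda,P)$ or $M^0(G,I,\Lambda,P)$, so that $|S|\le|I|\cdot|G|\cdot|\Lambda|+1$ and it suffices to show that $G$, $I$, $\Lambda$ are each countable. I would handle $G$ via Craw--Young: the maximal subgroup $H_e\cong G$ at any nonzero idempotent $e\in S$ is a subsemigroup of $S$, the restriction $\omega|_{H_e}$ is a bounded below weight whose $\Omega$ is still $0$-cluster (so $\ell_1(H_e,\omega|_{H_e})$ is Arens regular by Theorem~\ref{BR}), and Craw--Young then forces $G$ to be countable.

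For $I$ in the completely simple case, fix any $\lambda_0\in\Lambda$ and set $e_i:=(i,P_{\lambda_0,i}^{-1},\lambda_0)$, $i\in I$. A direct calculation gives $e_ie_j=e_i$, so $\{e_i:i\in I\}$ is a left-zero sub-semigroup of $S$ in bijection with $I$. One then has $\Omega(e_i,e_j)=1/\omega(e_j)$, and the $0$-cluster condition forces $\omega(e_{j_m})\to\infty$ along every sequence of distinct $j$'s; Lemma~\ref{countable1} yields $|I|$ countable. The symmetric construction $f_\lambda:=(i_0,P_{\lambda,i_0}^{-1},\lambda)$ with a fixed $i_0$ handles $\Lambda$.

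The genuine obstacle is the completely $0$-simple case, where the above idempotents are unavailable because $P_{\lambda_0,i}$ may vanish. For each $i\in I$ one can still, by regularity of $P$, pick $\lambda(i)$ with $P_{\lambda(i),i}\neq 0$ and form the idempotent $e_i:=(i,P_{\lambda(i),i}^{-1},\lambda(i))$, but now products $e_ie_j$ may collapse to the zero element of $S$. The observation that resolves this is that $\omega$ is bounded below on \emph{all} of $S$, including the zero, so $\omega(e_ie_j)\geq\varepsilon$ whether the product vanishes or not. Picking (by Lemma~\ref{countable1}) an infinite sequence $(i_n)$ with $\omega(e_{i_n})\leq M$ and forming the two disjoint subsequences $x_k:=e_{i_{2k-1}}$ and $y_l:=e_{i_{2l}}$, one obtains
\[ \Omega(x_k,y_l)\ =\ \frac{\omega(x_ky_l)}{\omega(x_k)\omega(y_l)}\ \geq\ \frac{\varepsilon}{M^2}\ >\ 0 \qquad(k,l\in\mathbb{N}). \]
A standard double Cantor-diagonal extraction then furnishes sub-subsequences along which both iterated limits of $\Omega$ exist and are each at least $\varepsilon/M^2>0$, flatly contradicting the $0$-cluster hypothesis. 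The same argument with $f_\lambda:=(i(\lambda),P_{\lambda,i(\lambda)}^{-1},\lambda)$ handles $\Lambda$.

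As a side remark, the estimate $\Omega\geq\varepsilon/M^2$ used in the $0$-simple step does not actually rely on the Rees decomposition and applies verbatim to any uncountable semigroup (with the $e_i$'s replaced by elements of $S$ themselves), so in fact this argument proves the class-free statement that every semigroup admitting a bounded below $0$-cluster weight is countable --- an observation that would remove the ``$\ell_1(S)$ is not Arens regular'' hypothesis from Theorem~\ref{cluster} and give shorter proofs of both Theorem~\ref{main} and Theorem~\ref{main1}.
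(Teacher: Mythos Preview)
Your proof is correct and follows the same overall architecture as the paper: Rees decomposition, then separate countability arguments for $G$, $I$, and $\Lambda$. Your treatment of $G$ (restricting to a maximal subgroup and invoking Craw--Young/Theorem~\ref{main}) is equivalent to the paper's transfer of the weight to $G$, and your completely simple argument for $I$ via the left-zero family $e_i=(i,P_{\lambda_0,i}^{-1},\lambda_0)$ is exactly what the paper does in the case $p_{\lambda_0 i}\neq 0$.

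The genuine difference is in the completely $0$-simple case. The paper keeps a \emph{fixed} $\lambda_0$ and defines $x_n$ according to whether $p_{\lambda_0 i_n}$ vanishes or not, then computes $x_nx_m$ explicitly (obtaining either $x_n$ or $0$) to force $\omega(x_m)\to\infty$. You instead let $\lambda(i)$ vary so that each $e_i$ is a genuine idempotent, and then bypass any product computation entirely: the single inequality $\Omega(x_k,y_l)\geq\varepsilon/M^2$, valid because $\omega$ is bounded below on all of $S$ (including the zero), already contradicts the $0$-cluster hypothesis. This is cleaner and, as you note in your side remark, does not use the Rees structure at all.

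That side remark is the real payoff of your approach: it shows directly that \emph{any} semigroup admitting a bounded below weight with $0$-cluster $\Omega$ is countable, with no structural hypotheses on $S$. This subsumes Theorems~\ref{main} and~\ref{main1} in one stroke, and it also removes the hypothesis ``$\ell^1(S)$ is not Arens regular'' from Theorem~\ref{cluster} --- a hypothesis whose use in the paper's own proof is flagged with question marks and is indeed not needed, since the lower bound $\Omega\geq\varepsilon/n_0^2$ there already suffices without any appeal to Arens irregularity.
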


\begin{proof}
Suppose that $\omega$ is a bounded below weight on $S$ such that $\Omega$ is $0-$cluster.  Let   $S$ be completely $0-$simple, then as it has been explained in \cite{Howie}, $S$ has the presentation  $S\cong M^0(G,I,\Lambda;P)=(I\times G\times \Lambda)\cup \{0\}$, equipped with the  multiplication  $$(i,a,\lambda)(j,b,\mu )
=\left\{\begin{array}{cr}
                                                       (i,ap_{\lambda j}b,\mu) &\mbox{if}\quad p_{\lambda j}\not=0\\
                                                       0 &\mbox{if}\quad p_{\lambda j}=0,
                                                     \end{array}\right.$$
               $$(i,a,\lambda)0=0(i,a,\lambda)=0.$$

                Fix $i_0\in I$, $\lambda_0\in\Lambda$ and define $f:I\rightarrow (0,\infty)$ by  $$f(i)=\left\{ \begin{array}{lr}

                                                     \omega(i,p_{\lambda_0i}^{-1},\lambda_0) & \mbox{if}\quad p_{\lambda_0i}\not=0\\
                                                     \omega(i,1,\lambda_0) & \mbox{if}\quad p_{\lambda_0i}=0.
                                                     \end{array}\right.
$$
Let $(i_n)$ be a sequence of distinct elements in  $I$ and set  $$x_n=\left\{ \begin{array}{lr}

                                                     (i_n,p_{\lambda_0i_n}^{-1},\lambda_0) & \mbox{if}\quad p_{\lambda_0i_n}\not=0\\
                                                     (i_n,1,\lambda_0) & \mbox{if}\quad p_{\lambda_0i_n}=0.
                                                     \end{array}\right.
$$

It is readily verified that if $p_{\lambda_0i_n}\not=0$ then $x_nx_m=x_n,$ for all $m\in\mathbb{N};$ indeed  \[x_nx_m=(i_n,p_{\lambda_0i_n}^{-1}, \lambda_0)(i_m,p_{\lambda_0i_m}^{-1}, \lambda_0 )
=(i_n,p_{\lambda_0i_n}^{-1}p_{\lambda_0i_m}p_{\lambda_0i_m}^{-1},\lambda_0)=(i_np_{\lambda_0i_n}^{-1}, \lambda_0)=x_n.\]
And if $p_{\lambda_0i_n}=0$ then $x_nx_m=0,$ for all $m\in\mathbb{N}.$

 Hence $\frac{1}{f(i_m)}=\frac{1}{\omega(x_m)}=\frac{\omega(x_nx_m)}{\omega(x_n)\omega(x_m)}=\Omega(x_n,x_m)$ in the case where $ p_{\lambda_0i_n}\not=0$ and $(\frac{\omega(0)}{f(i_m)})^2=(\frac{\omega(0)}{\omega(x_m)})^2=\frac{\omega(0)}{\omega(x_n)\omega(x_m)}$ whenever $p_{\lambda_0i_n}=0.$ These observations together with the $0-$clusterlity of $\Omega$ imply that  $(f(i_m))$ is unbounded. Hence  $I$ is countable, by Lemma \ref{countable1}. Similarly $\Lambda$ is countable. We are going to show that $G$ is also countable. To this end, let  $\omega_0(g)=\omega(i_0,gp_{\lambda_0,i}^{-1},\lambda_0)\quad (g\in G)$. Then $\omega_0$ is a weight  on $G$ such that $\Omega_0$ is $0-$cluster and so  $G$ is countable, by Theorem \ref{main}. Therefore $S$ is countable as claimed.  Proof for the case  that $S$ completely simple semigroup is similar.
\end{proof}

\noindent{{\bf Acknowledgments.}}  This research was supported by
the Centers of Excellence for Mathematics at the University of
Isfahan.


\begin{thebibliography}{99}
  \bibitem{A.Kh.R} F. Abtahi, B. Khodsiani \and A. Rejali, {\em The Arens regularity of  inverse semigroup algebras,} Preprint.


\bibitem{ARENS2}R. Arens, {\em The adjoint of a bilinear operation,} Proc. Amer. Math. Soc. \textbf{2},(1951), 839-848.

\bibitem{BR}J.W. Baker \and A. Rejali, {\em On the Arens regularity of weighted convolution algebras,} J. London Math. Soc. (2)
    \textbf{40}(1989), 535-546.


\bibitem{CY} I.G. Craw \and N.J. Young, {\em Regularity of multiplications in weighted group and semigroup algebras,}
    Quart. J. Math. \textbf{25} (1974), 351-358.

\bibitem{DL}  H.G. Dales and A.T.M. Lau, {\em The Second Duals of
Beurling Algebras,} Memoris American Math. Soc., \textbf{177}, 1-191,
(2005).






\bibitem{Howie} J.M. Howie, {\em An Introduction to Semigroup Theory,} Academic Press, (1976).




\bibitem{Young} N.J. Young, {\em  Semigroup algebras having regular multiplication,} Stadia Math. \textbf{47}(1973), 191-196.

\end{thebibliography}
\end{document}